\documentclass[reqno]{amsart}

\usepackage{amssymb}
\usepackage{graphicx}
\usepackage{amscd}
\usepackage[pagebackref]{hyperref}
\usepackage{color}
\usepackage{tabularx}
\usepackage[table]{xcolor}
\usepackage{float}
\usepackage{graphics,amsmath,amssymb}
\usepackage{amsthm}
\usepackage{amsfonts}
\usepackage{latexsym}
\usepackage{epsf}
\usepackage{xifthen}
\usepackage{mathrsfs}
\usepackage{dsfont}
\usepackage{makecell}
\usepackage{subfig}
\usepackage{amsmath}
\usepackage{listings}
\usepackage{etoolbox}
\usepackage{fancyhdr}
\usepackage{pdflscape}
\usepackage[title,toc,titletoc]{appendix}
\usepackage{enumitem}
\usepackage[noadjust]{cite}
\usepackage{tikz}
\usetikzlibrary{automata,positioning,arrows}
\usepackage{young}
\usepackage[object=vectorian]{pgfornament} 
\usepackage{lipsum,tikz}
\usepackage{multirow}
\usepackage[OT2,T1]{fontenc}
\usepackage{mathtools}
\usepackage{ytableau}

\hypersetup{
	colorlinks=true, 
	linktoc=all, 
	linkcolor=blue} 

\numberwithin{equation}{section}

\theoremstyle{theorem}
\newtheorem{theorem}{Theorem}[section]
\newtheorem*{theorem*}{Theorem}

\newtheorem{corollary}[theorem]{Corollary}

\newtheorem{innercustomgeneric}{\customgenericname}
\providecommand{\customgenericname}{}
\newcommand{\newcustomtheorem}[2]{%
	\newenvironment{#1}[1]
	{%
		\renewcommand\customgenericname{#2}%
		\renewcommand\theinnercustomgeneric{##1}%
		\innercustomgeneric
	}
	{\endinnercustomgeneric}
}
\newcustomtheorem{ctheorem}{Theorem}
\newcustomtheorem{clemma}{Lemma}

\theoremstyle{definition}

\newtheorem*{example*}{Example}
\newtheorem*{examples*}{Examples}
\newtheorem{remark}[theorem]{Remark}
\newtheorem*{remark*}{Remark}
\newtheorem*{remarks*}{Remarks}
\newtheorem*{note*}{Note}

\newtheoremstyle{named}{}{}{\itshape}{}{\bfseries}{.}{.5em}{\thmnote{#3} #1}
\theoremstyle{named}

\newtheoremstyle{customized}{}{}{\itshape}{}{\bfseries}{.}{.5em}{\thmnote{#3}}
\theoremstyle{customized}

\newcommand{\arxiv}[1]{\href{https://arxiv.org/abs/#1}{arXiv:#1}}

\DeclareMathAlphabet{\mydutchcal}{U}{dutchcal}{m}{n}

\newcommand{\qbinom}[2]{{\genfrac{[}{]}{0pt}{}{#1}{#2}}}

\newcommand{\dd}{\operatorname{d}}

\title{New central $q$-binomial identities}

\author[S. Chern]{Shane Chern}
\address[S. Chern]{Fakult\"at f\"ur Mathematik, Universit\"at Wien, Oskar-Morgenstern-Platz 1, Wien 1090, \"Osterreich}
\email{chenxiaohang92@gmail.com, xiaohangc92@univie.ac.at}

\author[K. Dilcher]{Karl Dilcher}
\address[K. Dilcher]{Department of Mathematics and Statistics, Dalhousie University, Halifax, NS, B3H 4R2, Canada}
\email{dilcher@mathstat.dal.ca}

\author[L. Jiu]{Lin Jiu}
\address[L. Jiu]{{}\textsuperscript{(1)}Zu Chongzhi Center, Duke Kunshan University, Kunshan, Suzhou, Jiangsu Province, 215316, P.R. China \newline \indent{}\textsuperscript{(2)}Department of Mathematics and Statistics, Dalhousie University, Halifax, NS, B3H 4R2, Canada}
\email{lin.jiu@dukekunshan.edu.cn, lin.jiu.work@gmail.com}

\date{}

\keywords{Central $q$-binomial coefficient, basic hypergeometric series, $q$-Gau\ss{} summation, $q$-Chu--Vandermonde summation, $q$-Pfaff--Saalsch\"utz summation.}

\subjclass[2020]{33D15, 33D05.}

\begin{document}
	
\sloppy

\begin{abstract}
	We establish several new series evaluations involving the central $q$-binomial coefficients, with the inspiration coming from earlier work by Vignat and one of the authors on the limiting case at $q\to 1$.
\end{abstract}

\maketitle

\section{Introduction}

The \emph{central binomial coefficients} $\binom{2k}{k}$ play an important role in the theory of infinite series. A classic example arises from the Maclaurin expansion of the inverse trigonometric function $\arcsin z$, asserting that \cite[p.~121, eq.~(4.24.1)]{RO2010}:
\begin{align*}
	\arcsin z = \sum_{k\ge 0} \binom{2k}{k} \frac{z^{2k+1}}{4^k(2k+1)},
\end{align*}
for $z\in \mathbb{C}$ with $|z|\le 1$. This produces a family of series for $\pi$ including
\begin{align*}
	\frac{\pi}{2} = \sum_{k\ge 0} \binom{2k}{k} \frac{1}{4^k(2k+1)}.
\end{align*}
Along this line, Vignat and one of the authors~\cite[Theorem~2.1]{DV2025} established the following generalization for $\alpha\in \mathbb{C}\backslash \{-1,-3,-5,\ldots\}$,
\begin{align}\label{eq:DV-1}
	\sum_{k\ge 0} \binom{2k}{k}\frac{1}{4^k (2k+1+\alpha)} = \frac{\Gamma(\frac{3}{2})\;\Gamma(\frac{\alpha+1}{2})}{\Gamma(\frac{\alpha+2}{2})} = \frac{\sqrt{\pi}}{2} \frac{\Gamma(\frac{\alpha+1}{2})}{\Gamma(\frac{\alpha+2}{2})},
\end{align}
where $\Gamma(z)$ is the \emph{gamma function}.

As is often the case, it is of general interest to consider $q$-analogs of these infinite series evaluations. We recall that the binomial coefficient $\binom{n}{m}$ is the specialization at $q \to 1$ of the \emph{$q$-binomial coefficient}
\begin{align*}
	\qbinom{n}{m}_q:=\begin{cases}
		\dfrac{(q;q)_n}{(q;q)_m(q;q)_{n-m}}, & \text{if $0\le m\le n$};\\[10pt]
		0, & \text{otherwise},
	\end{cases}
\end{align*}
where the \emph{$q$-Pochhammer symbols} are defined for $N\in \mathbb{N}\cup\{\infty\}$,
\begin{align*}
	(a;q)_N&:=\prod_{j=0}^{N-1} (1-a q^j),\\
	(a_1,a_2,\ldots,a_l;q)_N &:= (a_1;q)_N (a_2;q)_N \cdots (a_l;q)_N.
\end{align*}
The main objective of this paper is to investigate series evaluations involving the central $q$-binomial coefficients.

Although shown by an elegant technique of integration in \cite{DV2025}, the series in \eqref{eq:DV-1} can be more universally understood as a ${}_2 F_1$ hypergeometric series~\cite[p.~62, eq.~(2.1.2)]{AAR1999}. Namely,
\begin{align*}
	\sum_{k\ge 0} \binom{2k}{k}\frac{1}{4^k (2k+1+\alpha)} = \frac{1}{\alpha+1}\; {}_2 F_1\left(\begin{gathered}
		\tfrac{1}{2},\tfrac{\alpha+1}{2}\\
		\tfrac{\alpha+3}{2}
	\end{gathered};1\right).
\end{align*}
This then can be evaluated to the gamma-quotient in \eqref{eq:DV-1} by the \emph{Gau\ss{} summation formula}~\cite[p.~66, Theorem~2.2.2]{AAR1999}. It is known that the ${}_2 F_1$ series can be generalized to the \emph{basic hypergeometric series}
\begin{align*}
	{}_{r}\phi_{s}\left(\begin{gathered}
		a_{1},\ldots,a_{r}\\
		b_{1},\ldots,b_{s}
	\end{gathered}
	;q,z\right) := \sum_{k\ge 0}\left((-1)^{k}q^{\binom{k}{2}}\right)^{1+s-r}\frac{(a_{1},\ldots,a_{r};q)_{k}}{(q,b_{1},\ldots,b_{s};q)_{k}}z^{k}.
\end{align*}
In view of the \emph{$q$-Gau\ss{} summation}~\cite[p.~354, eq.~(II.8)]{GR2004}:
\begin{align}\label{eq:q-Gauss}
	{}_{2}\phi_{1}\left(\begin{gathered}
		a,b\\
		c
	\end{gathered}
	;q,\frac{c}{ab}\right)=\frac{(c/a,c/b;q)_{\infty}}{(c,c/(ab);q)_{\infty}},
\end{align}
it is not surprising to see the following $q$-analog of \eqref{eq:DV-1}:
\begin{align}
	\sum_{k\ge 0} \qbinom{2k}{k}_{q^2} \frac{q^k}{(-q;q)_{2k} [2k+1+\alpha]_{q}} &= \frac{1}{[\alpha+1]_q}\; {}_{2}\phi_1 \left(\begin{gathered}
		q,q^{\alpha+1}\\
		q^{\alpha+3}
	\end{gathered}
	;q^{2},q\right)\notag\\
	&= \frac{\Gamma_{q^2}(\frac{3}{2})\; \Gamma_{q^2}(\frac{\alpha+1}{2})}{\Gamma_{q^2}(\frac{\alpha+2}{2})},
\end{align}
where we have adopted the \emph{$q$-integers}
\begin{align*}
	[n]_q := 1 + q + \cdots + q^{n-1} = \frac{1-q^n}{1-q},
\end{align*}
and the \emph{$q$-gamma function}
\begin{align*}
	\Gamma_{q}(z):=\frac{(q;q)_{\infty}}{(q^{z};q)_{\infty}}(1-q)^{1-z}.
\end{align*}

What makes our study more interesting is the singular case of the series in \eqref{eq:DV-1}. Removing the singular term, the following series evaluation connecting the \emph{alternating harmonic numbers}
\begin{align*}
	\bar{H}_{n}:= \sum_{k=1}^n \frac{(-1)^{k-1}}{k}
\end{align*}
was shown in \cite[Theorem~3.1]{DV2025}.

\begin{theorem}[Dilcher--Vignat]
	For all nonnegative integers $m$,
	\begin{align}\label{eq:DV-2}
		\sum_{\substack{k\ge 0\\k\ne m}} \binom{2k}{k}\frac{1}{4^k (k-m)} = \binom{2m}{m} \frac{\log 2 - \bar{H}_{2m}}{2^{2m-1}}.
	\end{align}
\end{theorem}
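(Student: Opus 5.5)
The plan is to obtain \eqref{eq:DV-2} as a limiting case of \eqref{eq:DV-1}, by letting $\alpha$ approach the pole $-2m-1$ and removing the singular term. Put $\alpha=-2m-1+2\varepsilon$ with $\varepsilon$ small and nonzero. Then $2k+1+\alpha=2(k-m+\varepsilon)$. The term $k=m$ on the left of \eqref{eq:DV-1} equals $\binom{2m}{m}4^{-m}/(2\varepsilon)$, so
\begin{align*}
	\sum_{\substack{k\ge 0\\k\ne m}} \binom{2k}{k}\frac{1}{4^k\cdot 2(k-m+\varepsilon)} = \frac{\sqrt{\pi}}{2}\frac{\Gamma(\varepsilon-m)}{\Gamma(\varepsilon-m+\frac12)} - \binom{2m}{m}\frac{1}{4^m\cdot 2\varepsilon}.
\end{align*}
Since $\binom{2k}{k}4^{-k}\sim (\pi k)^{-1/2}$, the summands for $k\neq m$ are $O(k^{-3/2})$ uniformly for $|\varepsilon|\le 1/2$. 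By dominated convergence the left side therefore tends to half the sum in \eqref{eq:DV-2} as $\varepsilon\to0$. It remains to compute the constant term of the right side in its Laurent expansion at $\varepsilon=0$.

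For the numerator, write $\Gamma(\varepsilon-m)=\Gamma(1+\varepsilon)/\prod_{j=0}^{m}(\varepsilon-j)$. Taking logarithmic derivatives gives
\begin{align*}
	\Gamma(\varepsilon-m)=\frac{(-1)^m}{m!\,\varepsilon}\bigl(1+\varepsilon(H_m-\gamma)+O(\varepsilon^2)\bigr),
\end{align*}
where $H_m$ is the ordinary harmonic number. For the denominator, $\Gamma(\tfrac12-m+\varepsilon)=\Gamma(\tfrac12-m)\bigl(1+\varepsilon\psi(\tfrac12-m)+O(\varepsilon^2)\bigr)$, with $\Gamma(\tfrac12-m)=(-1)^m4^m m!\sqrt{\pi}/(2m)!$.

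The leading term of the quotient is then exactly $\binom{2m}{m}4^{-m}/(2\varepsilon)$, which cancels the subtracted pole; this is a consistency check. The remaining constant term is
\begin{align*}
	\binom{2m}{m}\frac{1}{2\cdot 4^m}\bigl(H_m-\gamma-\psi(\tfrac12-m)\bigr).
\end{align*}

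The only step needing care is evaluating $\psi(\tfrac12-m)$. The reflection formula $\psi(1-z)-\psi(z)=\pi\cot\pi z$ at $z=\tfrac12+m$ gives $\psi(\tfrac12-m)=\psi(\tfrac12+m)$, because the cotangent vanishes there. The recurrence from $\psi(\tfrac12)=-\gamma-2\log2$ then yields
\begin{align*}
	\psi(\tfrac12+m)=-\gamma-2\log 2+2\sum_{j=1}^m\frac{1}{2j-1}.
\end{align*}
Hence $H_m-\gamma-\psi(\tfrac12-m)=2\log2+H_m-2\sum_{j=1}^m\frac1{2j-1}$. Splitting $\bar H_{2m}$ into odd and even terms gives $2\bar H_{2m}=2\sum_{j=1}^m\frac1{2j-1}-H_m$, so this quantity equals $2(\log2-\bar H_{2m})$.

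The limit of the left side is therefore $\binom{2m}{m}4^{-m}(\log2-\bar H_{2m})$. Multiplying by $2$ to pass from $\frac1{2(k-m)}$ to $\frac1{k-m}$ gives $\binom{2m}{m}(\log2-\bar H_{2m})/2^{2m-1}$, which is \eqref{eq:DV-2}.
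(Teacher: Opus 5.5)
Your proof is correct. It is in substance the classical ($q=1$) counterpart of the paper's direct proof in Section~2, which proves the $q$-analog rather than \eqref{eq:DV-2} itself. There the authors perturb the singular denominator to $1-aq^{2k-2m}$; your $2(k-m+\varepsilon)$ corresponds to $a=q^{2\varepsilon}$. They then sum the full series by the $q$-Gau\ss{} formula, subtract the $k=m$ term, and extract the finite part at $a=1$ by L'Hospital and logarithmic differentiation of the infinite products.

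The paper never proves \eqref{eq:DV-2} directly at $q=1$. It quotes it from Dilcher--Vignat and recovers it only as the $q\to1$ limit of Theorem~\ref{th:singular-q}. In that proof the logarithmic derivative produces the series $\sum_{k\ge1}(-1)^{k-1}q^k/(1-q^k)$, and the factors $(aq^{-2m+1};q^2)_m/(aq^{-2m};q^2)_m$ cancel its first $2m$ terms, leaving the tail $\sum_{k\ge 2m+1}(-1)^{k-1}q^k/[k]_q$. That cancellation plays exactly the role of your reflection step $\psi(\tfrac12-m)=\psi(\tfrac12+m)$.

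Your route is self-contained at $q=1$. It needs only \eqref{eq:DV-1}, the standard values $\Gamma(\tfrac12-m)$ and $\psi(\tfrac12)=-\gamma-2\log 2$, and an explicit dominated-convergence argument. Recovering \eqref{eq:DV-2} from the $q$-identity instead requires interchanging $q\to1$ with the infinite sums on both sides, which the paper's Remark does not spell out. The paper's route, in exchange, yields the full $q$-identity with \eqref{eq:DV-2} as a corollary, and the harmonic-type quantities arise from elementary logarithmic derivatives rather than digamma values.
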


In this work, we focus on the natural $q$-analog of the series in \eqref{eq:DV-2}:
\begin{align}\label{eq:singular-q-series}
	\sum_{\substack{k\ge 0\\k\ne m}} \qbinom{2k}{k}_{q^2}\frac{q^k}{(-q;q)_{2k} [k-m]_{q^2}}.
\end{align}
Our main result is as follows.

\begin{theorem}\label{th:singular-q}
	For all nonnegative integers $m$,
	\begin{align}\label{eq:singular-q}
		\sum_{\substack{k\ge 0\\k\ne m}} \qbinom{2k}{k}_{q^2}\frac{q^k}{(-q;q)_{2k} [k-m]_{q^2}} = \qbinom{2m}{m}_{q^2}\frac{(1+q)q^m}{(-q;q)_{2m}} \sum_{k\ge 2m+1} \frac{(-1)^{k-1} q^k}{[k]_q}.
	\end{align}
\end{theorem}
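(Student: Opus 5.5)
The plan is to obtain \eqref{eq:singular-q} as the finite part of the regular $q$-analog of \eqref{eq:DV-1} at one of its poles. The basic observation is $[2n]_q=(1+q)[n]_{q^2}$, so $[2k-2m]_q=(1+q)[k-m]_{q^2}$.

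\textbf{Step 1: a meromorphic identity in $x=q^{\alpha}$.} Write $a_k:=\qbinom{2k}{k}_{q^2}q^k/(-q;q)_{2k}$. The $q$-analog of \eqref{eq:DV-1} stated in the introduction, which follows from the $q$-Gau\ss{} summation \eqref{eq:q-Gauss} with base $q^2$, reads in terms of $x$ as
\begin{align*}
	F(x):=\sum_{k\ge 0}\frac{(1-q)\,a_k}{1-xq^{2k+1}} = (1-q)\,\frac{(q^2,q^3;q^2)_\infty}{(q,q^2;q^2)_\infty}\cdot\frac{(xq^{2};q^2)_\infty}{(xq;q^2)_\infty}.
\end{align*}
Here I have rewritten the $\Gamma_{q^2}$-quotient as a product. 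Since $a_k=O(q^k)$, the left side converges uniformly on compact sets avoiding the points $x=q^{-2k-1}$. Both sides are therefore meromorphic in $x$, and the identity, proved for $|x|$ small, extends to all $x$ by analytic continuation.

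\textbf{Step 2: isolate the pole.} Set $x=q^{-2m-1}y$ with $y\to 1$. Then $1-xq^{2k+1}=1-yq^{2k-2m}$. The term $k=m$ on the left is $(1-q)a_m/(1-y)$. For $k\ne m$, as $y\to1$ the term tends to $(1-q)a_k/(1-q^{2k-2m})=a_k/((1+q)[k-m]_{q^2})$. Interchanging the limit with the sum is legitimate by uniform convergence, so
\begin{align*}
	\frac{1}{1+q}\,\mathrm{LHS}=\lim_{y\to1}\Bigl(F(q^{-2m-1}y)-\frac{(1-q)a_m}{1-y}\Bigr).
\end{align*}

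\textbf{Step 3: expand the product at $y=1$.} On the right, $(xq;q^2)_\infty=(yq^{-2m};q^2)_\infty$ contains the factor $1-y$ at $j=m$. I write $F=G(y)/(1-y)$, where
\begin{align*}
	G(y)=C\,\frac{(yq^{1-2m};q^2)_\infty}{\prod_{j\ne m}(1-yq^{2j-2m})}
\end{align*}
and $C$ collects the constant factors. Matching residues gives $G(1)=(1-q)a_m$; this is a finite product computation which I would verify directly as a consistency check. The desired limit is then $-G'(1)=-G(1)\,(\log G)'(1)$. The logarithmic derivative is explicit:
\begin{align*}
	(\log G)'(1)=-\sum_{j\ge0}\frac{q^{2j+1-2m}}{1-q^{2j+1-2m}}+\sum_{j\ne m}\frac{q^{2j-2m}}{1-q^{2j-2m}}.
\end{align*}

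\textbf{Step 4: simplify the sums (main obstacle).} The remaining work is to reduce this expression to $-\frac{q^m}{1-q}\cdot(\cdots)$, in the form $\sum_{k\ge 2m+1}(-1)^{k-1}q^k(1-q)/(1-q^k)$, up to the prefactor. The finitely many terms with negative exponents must be rewritten using
\begin{align*}
	\frac{q^{-n}}{1-q^{-n}}=-1-\frac{q^{n}}{1-q^{n}}.
\end{align*}
One then checks that the odd indices $1,3,\dots,2m-1$ and the even indices $2,4,\dots,2m$ pair off, cancelling exactly the terms with $k\le 2m$ in the alternating tail. This should also account for the extra factor $q^m$ (through the accumulated $-1$'s and the $q^{-n}$ terms) and the factor $(1+q)$ (from Step 2). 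Tracking these constants and signs precisely, so that the final coefficient is exactly $\qbinom{2m}{m}_{q^2}(1+q)q^m/(-q;q)_{2m}$, is where I expect the real care to be needed. I would check the case $m=0$ by hand first, where no index reflection is needed, and then the case $m=1$, before doing the general bookkeeping.
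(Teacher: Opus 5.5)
Your proposal is correct and is essentially the paper's proof: after $x=q^{-2m-1}y$ your series is literally the paper's ${}_2\phi_1(q,aq^{-2m};aq^{-2m+2};q^2,q)$ with $a=y$, summed by $q$-Gau\ss{}; you then remove the $k=m$ term and compute the limit $y\to1$ as $-G(1)(\log G)'(1)$, with your residue matching for $G(1)$ replacing the paper's direct evaluation of $(q^{1-2m};q^2)_m/(q^{-2m};q^2)_m$. Two small fixes: the constant in Step~1 should be $(q^2;q^2)_\infty/(q^3;q^2)_\infty$ (the one you wrote equals $1$), and in Step~4 the $+m$ from the reflected odd indices cancels the $-m$ from the even ones, so $-(\log G)'(1)=\sum_{n\ge 2m+1}(-1)^{n-1}q^n/(1-q^n)$ exactly, with the factor $q^m$ already inside $G(1)=(1-q)a_m$ rather than emerging from the bookkeeping.
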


\begin{remark}
	Letting $q\to 1$, we in particular have
	\begin{align*}
		\lim_{q\to 1} \sum_{k\ge 2m+1} \frac{(-1)^{k-1} q^k}{[k]_q} = \sum_{k\ge 1} \frac{(-1)^{k-1}}{k} - \sum_{k=1}^{2m} \frac{(-1)^{k-1}}{k} = \log 2 - \bar{H}_{2m},
	\end{align*}
	thereby recovering \eqref{eq:DV-2}.
\end{remark}

\textbf{Outline of the paper.} First in Section~\ref{sec:singular-q-direct}, we give a direct proof of our main result in Theorem~\ref{th:singular-q}. Meanwhile, we note that the series \eqref{eq:singular-q-series} consists of two components, one \emph{infinite} and the other \emph{finite}, and each of them should have independent interest. Thus, in Sections~\ref{sec:infinite} and \ref{sec:finite}, we work on the two summations separately --- combining the relations in Theorems~\ref{th:singular-infinite} and \ref{th:singular-finite} produces an alternative proof of Theorem~\ref{th:singular-q}, although this makes a detour in comparison with the one in Section~\ref{sec:singular-q-direct}. Finally, the evaluation for the finite component suggests some variants of Theorem~\ref{th:singular-finite}, and we will discuss them in Section~\ref{sec:var}.

\section{A direct proof of Theorem~\ref{th:singular-q}}\label{sec:singular-q-direct}

In this section, we provide a direct proof of Theorem~\ref{th:singular-q}. In doing so, we need a widely known trick of introducing auxiliary variables for series transformations. First, we note that
\begin{align}\label{eq:CentralQBinomimal2QPochhammer}
	\qbinom{2k}{k}_{q^2}\frac{1}{(-q;q)_{2k}} = \frac{(q;q^2)_k}{(q^2;q^2)_k}.
\end{align}
Therefore,
\begin{align*}
	\sum_{\substack{k\ge 0\\k\ne m}} \qbinom{2k}{k}_{q^2}\frac{q^k}{(-q;q)_{2k} [k-m]_{q^2}} = (1-q^2) \sum_{\substack{k\ge 0\\k\ne m}} \frac{(q;q^2)_{k}q^{k}}{(q^2;q^2)_{k}(1-q^{2k-2m})}.
\end{align*}
Now let us define
\begin{align*}
	F(a,q) := \frac{1}{1-aq^{-2m}}\left({}_{2}\phi_1 \left(\begin{gathered}
		q,aq^{-2m}\\
		aq^{-2m+2}
	\end{gathered}
	;q^{2},q\right) - \frac{(q,aq^{-2m};q^2)_m q^m}{(q^2,aq^{-2m+2};q^2)_m}\right).
\end{align*}
In other words,
\begin{align*}
	F(a,q) = \sum_{\substack{k\ge 0\\k\ne m}} \frac{(q;q^2)_{k}q^{k}}{(q^2;q^2)_{k}(1-aq^{2k-2m})},
\end{align*}
so that
\begin{align*}
	\sum_{\substack{k\ge 0\\k\ne m}} \qbinom{2k}{k}_{q^2}\frac{q^k}{(-q;q)_{2k} [k-m]_{q^2}} = (1-q^2) \lim_{a\to 1} F(a,q).
\end{align*}
For the ${}_2 \phi_1$ series in $F(a,q)$, the $q$-Gau\ss{} summation \eqref{eq:q-Gauss} gives us
\begin{align*}
	{}_{2}\phi_1 \left(\begin{gathered}
		q,aq^{-2m}\\
		aq^{-2m+2}
	\end{gathered}
	;q^{2},q\right) = \frac{(q^2,aq^{-2m+1};q^2)_\infty}{(q,aq^{-2m+2};q^2)_\infty}.
\end{align*}
Thus,
\begin{align*}
	F(a,q) &= \frac{1}{1-aq^{-2m}}\left(\frac{(q^2,aq^{-2m+1};q^2)_\infty}{(q,aq^{-2m+2};q^2)_\infty} - \frac{(q,aq^{-2m};q^2)_m q^m}{(q^2,aq^{-2m+2};q^2)_m}\right)\\
	&= \frac{1}{1-a} \left(\frac{(q^2,aq;q^2)_\infty (aq^{-2m+1};q^2)_m}{(q,aq^2;q^2)_\infty (aq^{-2m};q^2)_m} - \frac{(q;q^2)_m q^m}{(q^2;q^2)_m}\right).
\end{align*}
Since
\begin{align*}
	\lim_{a\to 1} \frac{(q^2,aq;q^2)_\infty (aq^{-2m+1};q^2)_m}{(q,aq^2;q^2)_\infty (aq^{-2m};q^2)_m} = \frac{(q^{-2m+1};q^2)_m}{(q^{-2m};q^2)_m} = \frac{(q;q^2)_m q^m}{(q^2;q^2)_m},
\end{align*}
we may apply the L'Hospital rule to get
\begin{align*}
	\lim_{a\to 1} F(a,q) &= - \frac{\partial}{\partial a}\bigg\vert_{a=1} \left(\frac{(q^2,aq;q^2)_\infty (aq^{-2m+1};q^2)_m}{(q,aq^2;q^2)_\infty (aq^{-2m};q^2)_m} - \frac{(q;q^2)_m q^m}{(q^2;q^2)_m}\right)\\
	&= - \frac{(q^2;q^2)_\infty}{(q;q^2)_\infty} \frac{\partial}{\partial a}\bigg\vert_{a=1} \frac{(aq;q^2)_\infty (aq^{-2m+1};q^2)_m}{(aq^2;q^2)_\infty (aq^{-2m};q^2)_m}.
\end{align*}
Recall that for a generic function $f(a)$,
\begin{align*}
	\frac{\dd}{\dd a} f(a) = f(a) \frac{\dd}{\dd a} \log f(a).
\end{align*}
We then compute that
\begin{align*}
	\lim_{a\to 1} F(a,q) &= - \frac{(q^{-2m+1};q^2)_m}{(q^{-2m};q^2)_m} \frac{\partial}{\partial a}\bigg\vert_{a=1} \left(\sum_{k\ge 1}\log \frac{1-aq^{2k-1}}{1-aq^{2k}} + \sum_{k=1}^m\log \frac{1-aq^{-2k+1}}{1-aq^{-2k}}\right)\\
	&= \frac{(q;q^2)_m q^m}{(q^2;q^2)_m} \left(\sum_{k\ge 1} \frac{(-1)^{k-1}q^k}{1-q^{k}} + \sum_{k=1}^{2m} \frac{(-1)^{k-1}q^{-k}}{1-q^{-k}}\right)\\
	&= \frac{(q;q^2)_m q^m}{(q^2;q^2)_m} \left(\sum_{k\ge 2m+1} \frac{(-1)^{k-1}q^k}{1-q^{k}} + \sum_{k=1}^{2m} (-1)^k\right)\\
	&= \frac{(q;q^2)_m q^m}{(q^2;q^2)_m} \sum_{k\ge 2m+1} \frac{(-1)^{k-1}q^k}{1-q^{k}}.
\end{align*}
Finally,
\begin{align*}
	\sum_{\substack{k\ge 0\\k\ne m}} \qbinom{2k}{k}_{q^2}\frac{q^k}{(-q;q)_{2k} [k-m]_{q^2}} &= (1-q^2) \lim_{a\to 1} F(a,q)\\
	& = \frac{(q;q^2)_m (1+q) q^m}{(q^2;q^2)_m} \sum_{k\ge 2m+1} \frac{(-1)^{k-1}q^k}{[k]_q},
\end{align*}
which is exactly the right-hand side of \eqref{eq:singular-q} after rewriting the prefactor in terms of the central $q$-binomial coefficient according to \eqref{eq:CentralQBinomimal2QPochhammer}.

\section{The infinite component}\label{sec:infinite}

Now we move on to the infinite component in \eqref{eq:singular-q-series}, which also exhibits a neat evaluation.

\begin{theorem}\label{th:singular-infinite}
	For all nonnegative integers $m$,
	\begin{align}\label{eq:singular-infinite}
		&\sum_{k\ge m+1} \qbinom{2k}{k}_{q^2}\frac{q^k}{(-q;q)_{2k} [k-m]_{q^2}}\notag\\
		&\qquad = \qbinom{2m}{m}_{q^2}\frac{(1+q)q^m}{(-q;q)_{2m}} \left(\sum_{k\ge 1} \frac{q^{2k-1}}{[2k-1]_q} - \sum_{k\ge 1} \frac{q^{2k+2m}}{[2k+2m]_q}\right).
	\end{align}
\end{theorem}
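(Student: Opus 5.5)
We plan to deduce Theorem~\ref{th:singular-infinite} from Theorem~\ref{th:singular-q}. Write $P_m:=\qbinom{2m}{m}_{q^2}\frac{(1+q)q^m}{(-q;q)_{2m}}$. The left-hand side of \eqref{eq:singular-q} equals the infinite component ($k\ge m+1$) plus the finite component ($0\le k\le m-1$). The sums on the right converge absolutely for $|q|<1$, so we may split $\sum_{k\ge 2m+1}(-1)^{k-1}q^k/[k]_q$ into its odd and even parts:
\begin{align*}
\sum_{k\ge 2m+1} \frac{(-1)^{k-1}q^k}{[k]_q} = \sum_{k\ge m+1}\frac{q^{2k-1}}{[2k-1]_q} - \sum_{k\ge 1}\frac{q^{2k+2m}}{[2k+2m]_q}.
\end{align*}
Comparing with the right-hand side of \eqref{eq:singular-infinite}, Theorem~\ref{th:singular-infinite} is equivalent to the finite evaluation
\begin{align*}
\sum_{k=0}^{m-1} \qbinom{2k}{k}_{q^2}\frac{q^k}{(-q;q)_{2k} [k-m]_{q^2}} = -P_m \sum_{j=1}^{m} \frac{q^{2j-1}}{[2j-1]_q}.
\end{align*}
So the whole task reduces to proving this terminating identity.

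Using \eqref{eq:CentralQBinomimal2QPochhammer}, the left side is $(1-q^2)\,T_m$ with
\begin{align*}
T_m=\sum_{k=0}^{m-1}\frac{(q;q^2)_k q^k}{(q^2;q^2)_k(1-q^{2k-2m})}.
\end{align*}
Dividing by $(1-q^2)$, the claim becomes $T_m=-\frac{(q;q^2)_m q^m}{(q^2;q^2)_m}\sum_{j=1}^m \frac{q^{2j-1}}{1-q^{2j-1}}$. We will prove this by induction on $m$. The case $m=0$ is trivial, since both sides are empty sums.

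For the inductive step we need to relate $T_{m+1}$ to $T_m$. The obstacle is that the denominator $1-q^{2k-2m}$ depends on $m$ as well as on $k$. One option is partial fractions in the variable $x=q^{-2m}$, viewing $T_m$ as a rational function. The other, which we prefer, is the auxiliary-parameter trick of Section~\ref{sec:singular-q-direct}, now applied to a terminating sum. Put
\begin{align*}
T_m(a)=\sum_{k=0}^{m-1}\frac{(q;q^2)_k q^k}{(q^2;q^2)_k(1-aq^{2k-2m})}.
\end{align*}
Using $\frac{1}{1-aq^{2k-2m}}=\frac{1}{1-aq^{-2m}}\frac{(aq^{-2m};q^2)_k}{(aq^{-2m+2};q^2)_k}$, we write $T_m(a)$ as the full ${}_3\phi_2$ sum $\sum_{k=0}^{m}$ minus its $k=m$ term. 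We then choose an extra top/bottom pair so that the full sum terminates and becomes balanced, making it summable by $q$-Pfaff--Saalsch\"utz. A natural choice is to insert $q^{-2m}$ upstairs and the corresponding balancing parameter downstairs, with a limiting argument if needed. At $a=1$ the subtracted $k=m$ term is singular, as in Section~\ref{sec:singular-q-direct}, so we again take $\lim_{a\to1}$ via L'Hospital. Logarithmic differentiation of the resulting product of $q$-Pochhammer symbols should produce the finite sum $\sum_{j=1}^m q^{2j-1}/(1-q^{2j-1})$, exactly as the infinite products produced the alternating series before.

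If the Saalsch\"utz route cannot be balanced cleanly, we fall back on creative telescoping. We seek a rational certificate $R(m,k)$ such that the summand difference $t_{m+1}(k)-c_m t_m(k)$, where $c_m=\frac{(1-q^{2m+1})q}{1-q^{2m+2}}$ is the ratio of the prefactors, equals $G(k+1)-G(k)$ plus a correction. Summing this relation over $k$ should give $T_{m+1}-c_mT_m=-\frac{(q;q^2)_{m+1}q^{m+1}}{(q^2;q^2)_{m+1}}\frac{q^{2m+1}}{1-q^{2m+1}}$, which is precisely the recurrence satisfied by the right-hand side. Finding this certificate, equivalently making the terminating summation produce the harmonic-type term correctly, is the main obstacle. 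Once the finite identity is established, subtracting it from Theorem~\ref{th:singular-q} yields \eqref{eq:singular-infinite}.
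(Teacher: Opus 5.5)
Your reduction is correct. Splitting $\sum_{k\ge 2m+1}$ by parity shows that, given Theorem~\ref{th:singular-q} (proved independently in Section~\ref{sec:singular-q-direct}), the statement is equivalent to the finite evaluation, which is exactly Theorem~\ref{th:singular-finite}.

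The gap is that you never prove that finite identity. The induction is only announced: the recurrence for $T_{m+1}-c_mT_m$ is asserted, not derived. The creative-telescoping certificate is explicitly left unfound. And the $q$-Pfaff--Saalsch\"utz route fails as you set it up. With $a$ in the upper parameter $aq^{-2m}$ and in $aq^{-2m+2}$ below, the series does not terminate. So $T_m(a)$ is only a truncation of the non-terminating $q$-Gau\ss{} series of Section~\ref{sec:singular-q-direct}, which is not something $q$-Gau\ss{} evaluates. Appending a pair $q^{-2m}$ over $D$ does not repair this. $q$-Pfaff--Saalsch\"utz in base $q^2$ demands argument $q^2$ and the balancing value $D=q\cdot aq^{-2m}\cdot q^{2-2m}/(aq^{-2m+2})=q^{1-2m}$. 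Your summand then gets multiplied by $\frac{(q^{-2m};q^2)_k}{(q^{1-2m};q^2)_k}q^k$, which is already $\frac{1-q^{2m}}{1-q^{2m-1}}\neq 1$ at $k=1$. Matching the original summand would instead force $D=q^{-2m}$ and argument $q$, which is not balanced. So no admissible extra pair both terminates the series and leaves $T_m(a)$ unchanged, and ``a limiting argument if needed'' does not specify a way out.

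The missing idea is where to put the auxiliary parameter. Keep $q^{-2m}$ exact as an upper parameter, replacing $aq^{-2m}$, and perturb the lower parameter and the argument instead. That is, consider ${}_2\phi_1\bigl(q,q^{-2m};a^{-1}q^{-2m+2};q^2,a^{-1}q\bigr)$.
\begin{itemize}
\item As $a\to1$, its terms with $k<m$ tend to $(1-q^{-2m})$ times the summands of $T_m$.
\item The first $q$-Chu--Vandermonde summation (a confluent limit of $q$-Pfaff--Saalsch\"utz) evaluates it as $\frac{(a^{-1}q^{-2m+1};q^2)_m}{(a^{-1}q^{-2m+2};q^2)_m}$.
\item Subtract the $k=m$ term, which is singular at $a=1$, and apply L'Hospital to $(aq;q^2)_m$. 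This yields $\sum_{j=1}^m\frac{q^{2j-1}}{1-q^{2j-1}}$.
\end{itemize}
This is the paper's proof of Theorem~\ref{th:singular-finite}, and with it your detour becomes a complete, if roundabout, proof.

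The paper proves the present statement directly instead. After $k\mapsto k+m$, the left side is $(1-q^2)\frac{(q;q^2)_mq^m}{(q^2;q^2)_m}\sum_{k\ge1}\frac{(q^{2m+1};q^2)_kq^k}{(q^{2m+2};q^2)_k(1-q^{2k})}$. Writing $\frac{1}{1-q^{2k}}=\lim_{a\to1}\frac{(a;q^2)_k}{(1-a)(q^2;q^2)_k}$ embeds this in ${}_2\phi_1\bigl(a,q^{2m+1};q^{2m+2};q^2,a^{-1}q\bigr)$, which $q$-Gau\ss{} sums. One L'Hospital step then produces both infinite sums at once, with no need for Theorem~\ref{th:singular-q} or the finite component.
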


\begin{proof}
	We have, by again applying \eqref{eq:CentralQBinomimal2QPochhammer} and then shifting the index $k\mapsto k+m$,
	\begin{align*}
		\sum_{k\ge m+1} \qbinom{2k}{k}_{q^2}\frac{q^k}{(-q;q)_{2k} [k-m]_{q^2}} &= (1-q^2) \sum_{k\ge 1} \frac{(q;q^2)_{k+m}q^{k+m}}{(q^2;q^2)_{k+m}(1-q^{2k})}\\
		&= \frac{(1-q^2)q^m(q;q^2)_m}{(q^2;q^2)_m} \sum_{k\ge 1} \frac{(q^{2m+1};q^2)_k q^k}{(q^{2m+2};q^2)_k(1-q^{2k})}\\
		&= \qbinom{2m}{m}_{q^2}\frac{(1-q^2)q^m}{(-q;q)_{2m}} \sum_{k\ge 1} \frac{(q^{2m+1};q^2)_k q^k}{(q^{2m+2};q^2)_k(1-q^{2k})}.
	\end{align*}
	For the moment, we introduce the auxiliary series
	\begin{align*}
		U(a,q) := \frac{1}{1-a} \left({}_{2}\phi_1 \left(\begin{gathered}
			a,q^{2m+1}\\
			q^{2m+2}
		\end{gathered}
		;q^{2},a^{-1}q\right) - 1\right).
	\end{align*}
	It is clear that
	\begin{align*}
		\lim_{a\to 1} U(a,q) &= \lim_{a\to 1} \frac{1}{1-a} \sum_{k\ge 1} \frac{(a,q^{2m+1};q^2)_k (a^{-1}q)^k}{(q^2,q^{2m+2};q^2)_k}\\
		&= \left[\sum_{k\ge 1} \frac{(aq^2;q^2)_{k-1}(q^{2m+1};q^2)_k (a^{-1}q)^k}{(q^2,q^{2m+2};q^2)_k}\right]_{a=1}\\
		&= \sum_{k\ge 1} \frac{(q^{2m+1};q^2)_k q^k}{(q^{2m+2};q^2)_k(1-q^{2k})}.
	\end{align*}
	Now applying the $q$-Gau\ss{} summation \eqref{eq:q-Gauss} to the ${}_2 \phi_1$ series in $U(a,q)$, we obtain
	\begin{align*}
		U(a,q) = \frac{1}{1-a} \left(\frac{(q,a^{-1}q^{2m+2};q^2)_\infty}{(a^{-1}q,q^{2m+2};q^2)_\infty}-1\right).
	\end{align*}
	By L'Hospital,
	\begin{align*}
		\lim_{a\to 1} U(a,q) &= - \frac{\partial}{\partial a}\bigg\vert_{a=1} \frac{(q,a^{-1}q^{2m+2};q^2)_\infty}{(a^{-1}q,q^{2m+2};q^2)_\infty}\\
		&= - \left[\frac{(q,a^{-1}q^{2m+2};q^2)_\infty}{(a^{-1}q,q^{2m+2};q^2)_\infty}\sum_{k\ge 1} \frac{\partial}{\partial a} \log \frac{1-a^{-1}q^{2k+2m}}{1-a^{-1}q^{2k-1}}\right]_{a=1}\\
		&= \sum_{k\ge 1} \frac{q^{2k-1}}{1-q^{2k-1}} - \sum_{k\ge 1} \frac{q^{2k+2m}}{1-q^{2k+2m}}.
	\end{align*}
	The desired relation then follows.
\end{proof}

\section{The finite component}\label{sec:finite}

Although the finite component in \eqref{eq:singular-q-series} can be computed by subtracting \eqref{eq:singular-infinite} from \eqref{eq:singular-q}, we offer an independent proof for its own interest.

\begin{theorem}\label{th:singular-finite}
	For all nonnegative integers $m$,
	\begin{align}\label{eq:singular-finite}
		\sum_{k=0}^{m-1} \qbinom{2k}{k}_{q^2}\frac{q^k}{(-q;q)_{2k} [k-m]_{q^2}} = -\qbinom{2m}{m}_{q^2}\frac{(1+q)q^m}{(-q;q)_{2m}} \sum_{k=1}^m \frac{q^{2k-1}}{[2k-1]_q}.
	\end{align}
\end{theorem}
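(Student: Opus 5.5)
Since the paper asks for a proof independent of the infinite component, I will treat the left-hand side of \eqref{eq:singular-finite} as a terminating basic hypergeometric sum and introduce an auxiliary parameter, as in Sections~\ref{sec:singular-q-direct} and \ref{sec:infinite}.

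\textbf{Step 1: rewrite the finite sum.} By \eqref{eq:CentralQBinomimal2QPochhammer}, and reversing the index $k\mapsto m-k$, the left-hand side becomes
\begin{align*}
-(1-q^2)\sum_{k=1}^{m}\frac{(q;q^2)_{m-k}\,q^{m-k}}{(q^2;q^2)_{m-k}(1-q^{-2k})}.
\end{align*}
Using the standard reversal identities $(x;q^2)_{m-k}=(x;q^2)_m/(q^{2-2m}/x;q^2)_k\cdot(-q^2/x)^k q^{k(k-1)-2mk}$, I will express the summand as
\begin{align*}
\frac{(q;q^2)_m q^m}{(q^2;q^2)_m}\cdot \frac{(q^{-2m};q^2)_k}{(q^{1-2m};q^2)_k}\,q^{k}\cdot\frac{1}{1-q^{-2k}}.
\end{align*}
This already yields the prefactor $\qbinom{2m}{m}_{q^2}(1+q)q^m/(-q;q)_{2m}$. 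The powers of $q$ still need checking, but the quadratic exponents cancel between numerator and denominator.

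\textbf{Step 2: introduce an auxiliary parameter.} The factor $1/(1-q^{2k})$ together with the missing $(q^2;q^2)_k$ in the denominator suggests writing $1/(1-q^{2k})=\lim_{a\to1}(a q^2;q^2)_{k-1}/(q^2;q^2)_k$, exactly as in the proof of Theorem~\ref{th:singular-infinite}. So I define
\begin{align*}
V(a,q):=\frac{1}{1-a}\left({}_{2}\phi_1\left(\begin{gathered}a,q^{-2m}\\ q^{1-2m}\end{gathered};q^2,\frac{q}{a}\right)-1\right),
\end{align*}
where the parameters are chosen so that $c/(ab)=q^{1-2m}/(a q^{-2m})=q/a$. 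Then $\lim_{a\to1}V(a,q)$ equals the reversed sum $\sum_{k=1}^m \frac{(q^{-2m};q^2)_k q^k}{(q^{1-2m};q^2)_k(1-q^{2k})}$, up to the sign from $1/(1-q^{-2k})=-q^{2k}/(1-q^{2k})$. If that sign and power do not match, I will adjust the argument to $q^{-1}/a$ or $q^3/a$, whichever makes the exponent agree, and correspondingly use the $q$-Chu--Vandermonde form in place of $q$-Gau\ss.

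\textbf{Step 3: evaluate and differentiate.} Because the series terminates, the $q$-Chu--Vandermonde summation (equivalently, \eqref{eq:q-Gauss} with $b=q^{-2m}$) gives
\begin{align*}
{}_2\phi_1=\frac{(q^{1-2m}/a;q^2)_m}{(q^{1-2m};q^2)_m}.
\end{align*}
This expression equals $1$ at $a=1$, so L'Hospital applies. Logarithmic differentiation gives
\begin{align*}
-\frac{\partial}{\partial a}\Big|_{a=1}=-\sum_{j=1}^{m}\frac{q^{1-2j}}{1-q^{1-2j}}=\sum_{j=1}^m\frac{1}{1-q^{2j-1}}.
\end{align*}
Writing $\frac{1}{1-q^{2j-1}}=1+\frac{q^{2j-1}}{1-q^{2j-1}}$, the constant terms $\sum 1=m$ must cancel against the extra terms generated by the discrepancy $-q^{2k}/(1-q^{2k})=1-1/(1-q^{2k})$ from Step 2. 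That cancellation uses the Chu--Vandermonde value at $a=1$, namely the sum of the undifferentiated terms. This leaves $\sum_{k=1}^m q^{2k-1}/(1-q^{2k-1})$. Multiplying by $(1-q^2)$ converts $1/(1-q^{2k-1})$ into $(1+q)/[2k-1]_q$, which gives \eqref{eq:singular-finite}.

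\textbf{Main obstacle.} I expect the main difficulty to be the bookkeeping in Steps 1--2. Getting the exact power of $q$ in the reversed summand, and hence the right argument of the ${}_2\phi_1$, determines whether the $m$ constant terms cancel cleanly. As a fallback, if the shape does not match, I will verify the identity by induction on $m$. Both sides share the prefactor ratio $(1-q^{2m+1})q/(1-q^{2m+2})$ when passing from $m$ to $m+1$, and the left-hand side satisfies a simple recurrence via partial fractions in $1/(1-q^{2k-2m})$.
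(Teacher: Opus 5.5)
There is a genuine gap. As set up, your auxiliary series evaluates a different sum, and the bridge you propose in Step~3 is not valid.

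First, the bookkeeping in Step~1. There is no leading minus sign, since $[-k]_{q^2}=(1-q^{-2k})/(1-q^2)$. Also, the factor $q^{k}$ produced by the two reversal identities cancels the $q^{-k}$ in $q^{m-k}$. So the left-hand side of \eqref{eq:singular-finite} equals
\begin{align*}
-(1-q^2)\frac{(q;q^2)_m\,q^m}{(q^2;q^2)_m}\sum_{k=1}^m \frac{(q^{-2m};q^2)_k\,q^{2k}}{(q^{1-2m};q^2)_k\,(1-q^{2k})}.
\end{align*}

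Your $V(a,q)$, with argument $q/a$, instead produces $\sum_{k=1}^m \frac{(q^{-2m};q^2)_k\,q^{k}}{(q^{1-2m};q^2)_k(1-q^{2k})}$. The first $q$-Chu--Vandermonde formula evaluates this as $\sum_{j=1}^m\frac{1}{1-q^{2j-1}}$. Undoing the reversal, this is precisely Theorem~\ref{th:singular-finite-variant} (the $q^{2k}$ variant), not the statement at hand.

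The cancellation of the constant $m$ that you then invoke has no source. At $a=1$ the ${}_2\phi_1$ is identically $1$. The undifferentiated sum left by your splitting $1/(1-q^{-2k})=1-1/(1-q^{2k})$ is $\sum_{k=1}^m\frac{(q^{-2m};q^2)_k}{(q^{1-2m};q^2)_k}q^{k}$. This is not a Chu--Vandermonde sum, and it is not $m$: for $m=1$ it equals $1+q$.

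In fact, once Theorem~\ref{th:singular-finite-variant} is known, passing from $\sum_j\frac{1}{1-q^{2j-1}}$ to $\sum_j\frac{q^{2j-1}}{1-q^{2j-1}}$ is equivalent to the nontrivial identity
\begin{align*}
\sum_{k=0}^{m-1}\frac{(q;q^2)_k\,q^k}{(q^2;q^2)_k(1+q^{k-m})}=m\,q^m\frac{(q;q^2)_m}{(q^2;q^2)_m}.
\end{align*}
The paper obtains this only as a consequence of the theorem you are proving: it is the corollary in Section~\ref{sec:var}, where the power $q^{2m}$ should read $q^m$, as $m=1$ shows. Your fallback arguments $q^{-1}/a$ and $q^3/a$ are not of Chu--Vandermonde form either. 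The first form needs the argument to be exactly $cq^{2m}/a=q/a$, and the second needs $q^2$.

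The missing idea is to use the \emph{second} $q$-Chu--Vandermonde summation, with argument $q^2$ independent of $a$:
\begin{align*}
\frac{1}{1-a}\left({}_{2}\phi_1\left(\begin{gathered}a,q^{-2m}\\ q^{1-2m}\end{gathered};q^{2},q^{2}\right)-1\right)=\frac{1}{1-a}\left(\frac{(q^{1-2m}/a;q^2)_m\,a^m}{(q^{1-2m};q^2)_m}-1\right).
\end{align*}
As $a\to1$ the left side tends to exactly the $k$-sum displayed earlier. On the right, the factor $a^m$ supplies the $m$ you were looking for: $-\frac{\partial}{\partial a}\big\vert_{a=1}$ of the quotient is $\sum_{j=1}^m\bigl(\frac{1}{1-q^{2j-1}}-1\bigr)=\sum_{j=1}^m\frac{q^{2j-1}}{1-q^{2j-1}}$. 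This gives \eqref{eq:singular-finite}.

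Corrected this way, your argument is the reversed form of the paper's proof. The paper does not reverse the sum. It perturbs the lower parameter to $a^{-1}q^{2-2m}$, subtracts the $k=m$ term, and applies the first $q$-Chu--Vandermonde formula with argument $a^{-1}q$. Reversing a terminating series interchanges the two forms, which is why your setup needs the other one.

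Your induction fallback is only a sketch. Shifting $k\mapsto k+1$ and taking partial fractions does not give a recurrence for the left side alone. It brings in $\sum_{k=0}^{m-1}\frac{(q;q^2)_k\,q^k}{(q^2;q^2)_{k+1}}$, which must be evaluated separately. It telescopes to $\frac{1}{1-q}\bigl(1-\frac{q^m(q;q^2)_m}{(q^2;q^2)_m}\bigr)$, and with that the induction does close.
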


\begin{proof}
	Note that by \eqref{eq:CentralQBinomimal2QPochhammer},
	\begin{align*}
		\sum_{k=0}^{m-1} \qbinom{2k}{k}_{q^2}\frac{q^k}{(-q;q)_{2k} [k-m]_{q^2}} = (1-q^2) \sum_{k=0}^{m-1} \frac{(q;q^2)_{k}q^{k}}{(q^2;q^2)_{k}(1-q^{2k-2m})}.
	\end{align*}
	Let us define the auxiliary series
	\begin{align*}
		V(a,q) := \frac{1}{1-q^{-2m}} \left({}_{2}\phi_1 \left(\begin{gathered}
			q,q^{-2m}\\
			a^{-1}q^{-2m+2}
		\end{gathered}
		;q^{2},a^{-1}q\right) - \frac{(q,q^{-2m};q^2)_m (a^{-1}q)^m}{(q^2,a^{-1}q^{-2m+2};q^2)_m}\right).
	\end{align*}
	In particular,
	\begin{align*}
		V(a,q) = \sum_{k=0}^{m-1} \frac{(q,q^{-2m};q^2)_k (a^{-1}q)^k}{(q^2,a^{-1}q^{-2m+2};q^2)_k},
	\end{align*}
	so that
	\begin{align*}
		\lim_{a\to 1} V(a,q) = \sum_{k=0}^{m-1} \frac{(q;q^2)_{k}q^{k}}{(q^2;q^2)_{k}(1-q^{2k-2m})}.
	\end{align*}
	Now for the terminating ${}_2 \phi_1$ series in $V(a,q)$, we apply the \emph{first $q$-Chu--Vandermonde summation}~\cite[p.~354, eq.~(II.7)]{GR2004}:
	\begin{align*}
		{}_{2} \phi_{1} \left(\begin{matrix}
			a,q^{-N}\\
			c
		\end{matrix};q,\frac{cq^N}{a}\right) = \frac{(c/a;q)_N}{(c;q)_N}.
	\end{align*}
	Thus,
	\begin{align*}
		V(a,q) &= \frac{1}{1-q^{-2m}} \left(\frac{(a^{-1}q^{-2m+1};q^2)_m}{(a^{-1}q^{-2m+2};q^2)_m} - \frac{(q,q^{-2m};q^2)_m (a^{-1}q)^m}{(q^2,a^{-1}q^{-2m+2};q^2)_m}\right)\\
		&= \frac{1}{1-q^{-2m}} \left(\frac{(aq;q^2)_m q^{-m}}{(a;q^2)_m} - \frac{(q;q^2)_m q^{-m}}{(a;q^2)_m}\right)\\
		&= -\frac{q^m}{1-q^{2m}} \frac{(aq;q^2)_m - (q;q^2)_m}{(1-a)(aq^2;q^2)_{m-1}}.
	\end{align*}
	It follows that
	\begin{align*}
		\lim_{a\to 1} V(a,q) &= -\frac{q^m}{1-q^{2m}} \left(\lim_{a\to 1} \frac{1}{(aq^2;q^2)_{m-1}} \right)\left(\lim_{a\to 1} \frac{(aq;q^2)_m - (q;q^2)_m}{1-a}\right)\\
		&= -\frac{q^m}{(q^2;q^2)_m} \lim_{a\to 1} \frac{(aq;q^2)_m - (q;q^2)_m}{1-a}.
	\end{align*}
	By L'Hospital,
	\begin{align*}
		\lim_{a\to 1} V(a,q) &= \frac{q^m}{(q^2;q^2)_m} \frac{\partial}{\partial a}\bigg\vert_{a=1} (aq;q^2)_m\\
		&= \frac{q^m}{(q^2;q^2)_m} \left[(aq;q^2)_m \sum_{k=1}^m \frac{\partial}{\partial a} \log(1-aq^{2k-1})\right]_{a=1}\\
		&= -\frac{(q;q^2)_m q^m}{(q^2;q^2)_m} \sum_{k=1}^m \frac{q^{2k-1}}{1-q^{2k-1}}.
	\end{align*}
	Therefore,
	\begin{align*}
		\sum_{k=0}^{m-1} \qbinom{2k}{k}_{q^2}\frac{q^k}{(-q;q)_{2k} [k-m]_{q^2}} &= (1-q^2) \lim_{a\to 1} V(a,q)\\
		&= -\qbinom{2m}{m}_{q^2}\frac{(1-q^2)q^m}{(-q;q)_{2m}} \sum_{k=1}^m \frac{q^{2k-1}}{1-q^{2k-1}},
	\end{align*}
	as claimed.
\end{proof}

\section{Finite variants}\label{sec:var}

In the proof of Theorem~\ref{th:singular-finite}, we have applied the first $q$-Chu--Vandermonde summation. Notably, this formula has a natural companion, which suggests the following variant of \eqref{eq:singular-finite}.

\begin{theorem}\label{th:singular-finite-variant}
	For all nonnegative integers $m$,
	\begin{align}\label{eq:singular-finite-variant}
		\sum_{k=0}^{m-1} \qbinom{2k}{k}_{q^2}\frac{q^{2k}}{(-q;q)_{2k} [k-m]_{q^2}} = -\qbinom{2m}{m}_{q^2}\frac{(1+q)q^{2m}}{(-q;q)_{2m}} \sum_{k=1}^m \frac{1}{[2k-1]_q}.
	\end{align}
\end{theorem}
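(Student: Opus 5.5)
The plan is to mimic the proof of Theorem~\ref{th:singular-finite}, replacing the first $q$-Chu--Vandermonde summation by its companion, the \emph{second $q$-Chu--Vandermonde summation}~\cite[p.~354, eq.~(II.6)]{GR2004}. In base $q^2$ it reads
\begin{align*}
	{}_{2}\phi_{1}\left(\begin{matrix}
		A,q^{-2N}\\
		C
	\end{matrix};q^2,q^2\right) = \frac{(C/A;q^2)_N A^N}{(C;q^2)_N}.
\end{align*}
The point is that its argument is $q^2$, which produces exactly the extra factor $q^{2k}$ in the summand of \eqref{eq:singular-finite-variant}. First, by \eqref{eq:CentralQBinomimal2QPochhammer}, the left-hand side equals
\begin{align*}
	(1-q^2)\sum_{k=0}^{m-1}\frac{(q;q^2)_k q^{2k}}{(q^2;q^2)_k(1-q^{2k-2m})}.
\end{align*}
We then use $(q^{-2m};q^2)_k/(q^{-2m+2};q^2)_k=(1-q^{-2m})/(1-q^{2k-2m})$.

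Next I introduce the auxiliary series
\begin{align*}
	W(a,q) := \frac{1}{1-q^{-2m}}\left({}_{2}\phi_1\left(\begin{gathered}
		q,q^{-2m}\\
		a^{-1}q^{-2m+2}
	\end{gathered};q^2,q^2\right) - \frac{(q,q^{-2m};q^2)_m q^{2m}}{(q^2,a^{-1}q^{-2m+2};q^2)_m}\right).
\end{align*}
Here the perturbation $a^{-1}$ in the lower parameter regularizes the $k=m$ term, whose denominator $(a^{-1}q^{-2m+2};q^2)_m$ contains the factor $1-a^{-1}$. That term is removed, so $W(a,q)$ is the finite sum over $0\le k\le m-1$. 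This gives $\lim_{a\to1}W(a,q)=\sum_{k=0}^{m-1}\frac{(q;q^2)_kq^{2k}}{(q^2;q^2)_k(1-q^{2k-2m})}$.

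Applying the second $q$-Chu--Vandermonde formula with $A=q$, $N=m$, $C=a^{-1}q^{-2m+2}$ gives
\begin{align*}
	W(a,q)=\frac{1}{1-q^{-2m}}\cdot\frac{(a^{-1}q^{-2m+1};q^2)_m q^m-(q,q^{-2m};q^2)_m q^{2m}/(q^2;q^2)_m}{(a^{-1}q^{-2m+2};q^2)_m}.
\end{align*}
After reversing the finite products, the denominator is $(1-a^{-1})$ times a factor tending to a nonzero limit. The numerator vanishes at $a=1$: this is the identity $(q^{-2m+1};q^2)_m q^m=(q,q^{-2m};q^2)_mq^{2m}/(q^2;q^2)_m$, which follows by reversing the products and can be checked directly. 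L'Hospital's rule then reduces the limit to the $a$-derivative at $a=1$ of $(a^{-1}q^{-2m+1};q^2)_m$. By logarithmic differentiation this derivative equals $(q^{-2m+1};q^2)_m\sum_{j=1}^m \frac{q^{-(2j-1)}}{1-q^{-(2j-1)}}$. Since $\frac{q^{-(2j-1)}}{1-q^{-(2j-1)}}=-\frac{1}{1-q^{2j-1}}$, and $(1-q^2)/(1-q^{2j-1})=(1+q)/[2j-1]_q$, this produces the sum $\sum_{j=1}^m 1/[2j-1]_q$.

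The main obstacle is purely bookkeeping. One has to reverse the products $(q^{-2m+1};q^2)_m$, $(q^{-2m};q^2)_m$ and $(a^{-1}q^{-2m+2};q^2)_m$ into forms like $(aq;q^2)_m$, $(q^2;q^2)_m$ and $(1-a)(aq^2;q^2)_{m-1}$. One also has to track the resulting powers of $q$ and signs, together with the prefactor $1/(1-q^{-2m})=-q^{2m}/(1-q^{2m})$. The aim is to see that everything collapses to $-\frac{(q;q^2)_m q^{2m}}{(q^2;q^2)_m}$ times the sum above. Rewriting this prefactor via \eqref{eq:CentralQBinomimal2QPochhammer} then gives \eqref{eq:singular-finite-variant}.
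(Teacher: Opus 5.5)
Your proposal is correct and is essentially the paper's proof: your $W(a,q)$ is exactly the paper's auxiliary series $\widehat{V}(a,q)$, which is evaluated by the second $q$-Chu--Vandermonde summation and then by L'Hospital's rule at $a=1$. The only cosmetic difference is that you differentiate before reversing the products, so the subtracted term does not depend on $a$, and the paper's separate $-m$ contribution from the factor $a^m$ is absorbed directly into the terms $-1/(1-q^{2j-1})$.
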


\begin{proof}
	In view of \eqref{eq:CentralQBinomimal2QPochhammer}, this time we have
	\begin{align*}
		\sum_{k=0}^{m-1} \qbinom{2k}{k}_{q^2}\frac{q^{2k}}{(-q;q)_{2k} [k-m]_{q^2}} = (1-q^2) \sum_{k=0}^{m-1} \frac{(q;q^2)_{k}q^{2k}}{(q^2;q^2)_{k}(1-q^{2k-2m})}.
	\end{align*}
	Define the auxiliary series
	\begin{align*}
		\widehat{V}(a,q) := \frac{1}{1-q^{-2m}} \left({}_{2}\phi_1 \left(\begin{gathered}
			q,q^{-2m}\\
			a^{-1}q^{-2m+2}
		\end{gathered}
		;q^{2},q^2\right) - \frac{(q,q^{-2m};q^2)_m q^{2m}}{(q^2,a^{-1}q^{-2m+2};q^2)_m}\right),
	\end{align*}
	which satisfies
	\begin{align*}
		\lim_{a\to 1} \widehat{V}(a,q) = \sum_{k=0}^{m-1} \frac{(q;q^2)_{k}q^{2k}}{(q^2;q^2)_{k}(1-q^{2k-2m})}.
	\end{align*}
	For the terminating ${}_2 \phi_1$ series in $\widehat{V}(a,q)$, we need the \emph{second $q$-Chu--Vandermonde summation}~\cite[p.~354, eq.~(II.6)]{GR2004}:
	\begin{align*}
		{}_{2} \phi_{1} \left(\begin{matrix}
			a,q^{-N}\\
			c
		\end{matrix};q,q\right) = \frac{(c/a;q)_N a^N}{(c;q)_N}.
	\end{align*}
	Thus,
	\begin{align*}
		\widehat{V}(a,q) &= \frac{1}{1-q^{-2m}} \left(\frac{(a^{-1}q^{-2m+1};q^2)_m q^m}{(a^{-1}q^{-2m+2};q^2)_m} - \frac{(q,q^{-2m};q^2)_m q^{2m}}{(q^2,a^{-1}q^{-2m+2};q^2)_m}\right)\\
		&= -\frac{q^{2m}}{1-q^{2m}} \frac{(aq;q^2)_m - (q;q^2)_m a^m}{(1-a)(aq^2;q^2)_{m-1}},
	\end{align*}
	which further gives
	\begin{align*}
		\lim_{a\to 1} \widehat{V}(a,q) &= -\frac{q^{2m}}{(q^2;q^2)_m} \lim_{a\to 1} \frac{(aq;q^2)_m - (q;q^2)_m a^m}{1-a}\\
		&= \frac{(q;q^2)_mq^{2m}}{(q^2;q^2)_m} \left(-m + \sum_{k=1}^m \frac{\partial}{\partial a}\bigg\vert_{a=1} \log(1-aq^{2k-1})\right)\\
		&= -\frac{(q;q^2)_m q^{2m}}{(q^2;q^2)_m} \sum_{k=1}^m \frac{1}{1-q^{2k-1}},
	\end{align*}
	where we have again applied the L'Hospital rule. The required identity then follows by substituting this limit into the first equation in the proof.
\end{proof}

In light of Theorems~\ref{th:singular-finite} and \ref{th:singular-finite-variant}, we derive an intriguing identity.

\begin{corollary}
	For all nonnegative integers $m$,
	\begin{align}\label{eq:singular-finite-coro}
		\sum_{k=0}^{m-1} \qbinom{2k}{k}_{q^2}\frac{q^{k}}{(-q;q)_{2k} (1+q^{k-m})} = \qbinom{2m}{m}_{q^2}\frac{mq^{2m}}{(-q;q)_{2m}}.
	\end{align}
\end{corollary}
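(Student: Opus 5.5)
The plan is to obtain \eqref{eq:singular-finite-coro} as a linear combination of \eqref{eq:singular-finite} and \eqref{eq:singular-finite-variant}, chosen so that the harmonic-type sums on the right-hand sides cancel. Write $[k-m]_{q^2}=(1-q^{2k-2m})/(1-q^2)$ and factor $1-q^{2k-2m}=(1-q^{k-m})(1+q^{k-m})$. For each $k$ this gives
\begin{align*}
	\frac{q^k}{[k-m]_{q^2}}-q^{-m}\frac{q^{2k}}{[k-m]_{q^2}}
	=\frac{(1-q^2)\,q^k(1-q^{k-m})}{1-q^{2k-2m}}
	=\frac{(1-q^2)\,q^k}{1+q^{k-m}}.
\end{align*}
Multiplying by $\qbinom{2k}{k}_{q^2}/(-q;q)_{2k}$ and summing over $0\le k\le m-1$, the left-hand side of \eqref{eq:singular-finite-coro} equals $(1-q^2)^{-1}$ times the left side of \eqref{eq:singular-finite} minus $q^{-m}$ times the left side of \eqref{eq:singular-finite-variant}.

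Next I substitute the two evaluations. Put $C:=\qbinom{2m}{m}_{q^2}(1+q)/(-q;q)_{2m}$. The resulting combination is
\begin{align*}
	-Cq^m\sum_{k=1}^m\frac{q^{2k-1}}{[2k-1]_q}+q^{-m}\,Cq^{2m}\sum_{k=1}^m\frac{1}{[2k-1]_q}
	=Cq^m\sum_{k=1}^m\frac{1-q^{2k-1}}{[2k-1]_q}.
\end{align*}
Since $1-q^{2k-1}=(1-q)[2k-1]_q$, every summand equals $1-q$, so the sum is $m(1-q)$. Dividing by $1-q^2$ cancels the factors $1+q$ and $1-q$. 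This leaves $\qbinom{2m}{m}_{q^2}\,m\,q^{\gamma}/(-q;q)_{2m}$, where $\gamma$ is the $q$-exponent inherited from the prefactors, and the statement is proved once $\gamma=2m$.

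The main obstacle is exactly this exponent, and the computation above does not give $2m$. The factor $q^{-m}$ was forced in order to cancel the $(1-q^{k-m})$ factor, and it lowers the $q^{2m}$ coming from \eqref{eq:singular-finite-variant} to $q^m$, so $\gamma=m$. A direct check at $m=1$ agrees. The left side of \eqref{eq:singular-finite-coro} is the single term $1/(1+q^{-1})=q/(1+q)$. The stated right side is $(1+q^2)q^2/\bigl((1+q)(1+q^2)\bigr)=q^2/(1+q)$. Hence, as worded with $q^{2m}$, the identity fails already at $m=1$. The argument proves the version with $q^{m}$ in place of $q^{2m}$ on the right-hand side, and I do not see any other combination of the two theorems that would give $q^{2m}$.
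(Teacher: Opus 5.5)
Your proposal is correct and is exactly the paper's argument: the paper's proof divides \eqref{eq:singular-finite-variant} by $q^m$ and subtracts the result from \eqref{eq:singular-finite}, which is your combination term by term. Your diagnosis of the exponent is also right: this combination yields $\qbinom{2m}{m}_{q^2}\, m q^{m}/(-q;q)_{2m}$, so the printed $q^{2m}$ is a misprint, as your $m=1$ check shows (at $m=2$ both sides equal $2q^2(1+q+q^2)/\bigl((1+q)^2(1+q^2)\bigr)$ with $q^m$ but not with $q^{2m}$), and the $q\to 1$ remark cannot detect the error because both exponents have the same limit.
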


\begin{proof}
	The claimed result follows by dividing by $q^m$ on both sides of \eqref{eq:singular-finite-variant} and then taking its difference with \eqref{eq:singular-finite}.
\end{proof}

\begin{remark}
	The limiting case of \eqref{eq:singular-finite-coro} at $q\to 1$ is
	\begin{align}
		\sum_{k=0}^{m-1} \binom{2k}{k} \frac{1}{2^{2k+1}} = \binom{2m}{m} \frac{m}{4^m}.
	\end{align}
	This identity can be found in \cite[p.~14, eq.~(1.109)]{Gou1972}.
\end{remark}

Besides \eqref{eq:singular-finite-variant}, the $q$-Pfaff--Saalsch\"utz summation produces a second variant of \eqref{eq:singular-finite} in a \emph{squared} manner.

\begin{theorem}\label{th:singular-finite-variant-2}
	For all nonnegative integers $m$,
	\begin{align}\label{eq:singular-finite-variant-2}
		\sum_{k=0}^{m-1} \qbinom{2k}{k}_{q^2}^2\frac{q^{2k}}{(-q;q)_{2k}^2 [k-m]_{q^2}} = -\qbinom{2m}{m}_{q^2}^2\frac{(1+q)q^{2m}}{(-q;q)_{2m}^2} \sum_{k=1}^m \frac{[4k-2]_q}{[2k-1]_q^2}.
	\end{align}
\end{theorem}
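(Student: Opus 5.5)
The plan is to repeat the auxiliary-variable technique of Theorems~\ref{th:singular-finite} and \ref{th:singular-finite-variant}, with the $q$-Pfaff--Saalsch\"utz summation (base $q^2$) in place of $q$-Chu--Vandermonde:
\begin{align*}
	{}_{3}\phi_{2}\left(\begin{gathered}
		A,B,q^{-2N}\\
		C,ABq^{2-2N}/C
	\end{gathered};q^2,q^2\right) = \frac{(C/A,C/B;q^2)_N}{(C,C/(AB);q^2)_N}.
\end{align*}

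First, by \eqref{eq:CentralQBinomimal2QPochhammer} the left side of \eqref{eq:singular-finite-variant-2} equals
\begin{align*}
	(1-q^2)\sum_{k=0}^{m-1} \frac{(q;q^2)_k^2 q^{2k}}{(q^2;q^2)_k^2(1-q^{2k-2m})}.
\end{align*}
We then use the identity
\begin{align*}
	\frac{1}{1-q^{2k-2m}} = \frac{1}{1-q^{-2m}}\cdot\frac{(q^{-2m};q^2)_k}{(q^{-2m+2};q^2)_k}.
\end{align*}
With it, each summand becomes $(1-q^{-2m})^{-1}$ times the $k$-th term of a ${}_3\phi_2$ with numerator parameters $q,q,q^{-2m}$, denominator parameters $q^2,q^{-2m+2}$, and argument $q^2$.

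That series is not balanced: the product of the denominator parameters is $q^{4-2m}$, while $q$ times the product of the numerator parameters, in base $q^2$, should equal $q^{6-2m}$. The balance condition does become exactly satisfiable once the auxiliary parameter is introduced. I would therefore perturb one numerator and one denominator parameter simultaneously:
\begin{align*}
	\widetilde{V}(a,q) := \frac{1}{1-q^{-2m}}\left({}_{3}\phi_{2}\left(\begin{gathered}
		q,a^{-1}q,q^{-2m}\\
		q^2,a^{-1}q^{-2m+2}
	\end{gathered};q^2,q^2\right) - T_m(a)\right),
\end{align*}
where $T_m(a)$ is the $k=m$ term of the ${}_3\phi_2$. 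The key check is that this ${}_3\phi_2$ is balanced: $q\cdot a^{-1}q\cdot q^{-2m}\cdot q^2 = a^{-1}q^{4-2m}$ equals the product of the denominators. At $a=1$ the terms with $k\le m-1$ reduce to the desired summands, so $\lim_{a\to1}\widetilde V(a,q)$ is the finite sum above.

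Next, Pfaff--Saalsch\"utz with $A=q$, $B=a^{-1}q$, $C=q^2$, $N=m$ evaluates the ${}_3\phi_2$ as
\begin{align*}
	\frac{(q,aq;q^2)_m}{(q^2,a;q^2)_m}.
\end{align*}
The factor $(a;q^2)_m$ contains $1-a$. Meanwhile
\begin{align*}
	T_m(a) = \frac{(q,a^{-1}q,q^{-2m};q^2)_m q^{2m}}{(q^2,q^2,a^{-1}q^{-2m+2};q^2)_m},
\end{align*}
and its denominator $(a^{-1}q^{-2m+2};q^2)_m$ contains the factor $1-a^{-1}$. Hence the difference has the form $\bigl(G(a)-H(a)\bigr)/(1-a)$ with $G(1)=H(1)$, and the limit follows by L'Hospital, just as for $V$ and $\widehat V$.

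The main obstacle is the derivative computation. I would rewrite $(a^{-1}q^{-2m+2};q^2)_m$ and $(a^{-1}q;q^2)_m$ by reversing the products, so that both $G$ and $H$ carry the common prefactor
\begin{align*}
	\frac{(q;q^2)_m^2 q^{2m}}{(q^2;q^2)_m^2}
\end{align*}
times simple $a$-dependent products. The logarithmic derivative of $G$ comes from $(aq;q^2)_m$ and gives $-\sum_{k=1}^m q^{2k-1}/(1-q^{2k-1})$. The logarithmic derivative of $H$ comes from the reversed $(a^{-1}q;q^2)_m$ and powers of $a$, and should give terms of the form $1/(1-q^{2k-1})$. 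Combining these, I expect
\begin{align*}
	\sum_{k=1}^m \frac{1+q^{2k-1}}{1-q^{2k-1}} = \sum_{k=1}^m \frac{(1-q)\,[4k-2]_q}{(1-q)^2[2k-1]_q^2}\cdot(1-q).
\end{align*}
After multiplying by $1-q^2=(1-q)(1+q)$, this should produce exactly the prefactor $(1+q)$ and the summand $[4k-2]_q/[2k-1]_q^2$ in \eqref{eq:singular-finite-variant-2}. Careful bookkeeping of the powers of $q$ and $a$ in the reversed Pochhammer symbols is where errors are most likely, so I would verify the final identity at $m=1$ as a check.
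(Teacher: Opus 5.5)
Your proposal is the paper's proof. Your $\widetilde V$ is exactly the paper's $W(a,q)$, evaluated by the same Pfaff--Saalsch\"utz specialization $A=q$, $B=a^{-1}q$, $C=q^2$, $N=m$, and followed by the same L'Hospital/logarithmic-derivative limit, which gives $-\frac{(q;q^2)_m^2q^{2m}}{(q^2;q^2)_m^2}\sum_{k=1}^m\frac{1+q^{2k-1}}{1-q^{2k-1}}$.

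There are two slips to fix. First, the unperturbed ${}_3\phi_2$ is in fact balanced, since $q^2\cdot q\cdot q\cdot q^{-2m}=q^{4-2m}$. The real reason for introducing $a$ is that the $k=m$ term has a vanishing denominator $(q^{-2m+2};q^2)_m$; once the denominator parameter is perturbed, the numerator $a^{-1}q$ is needed to keep the series balanced. Second, your conversion display is off by a factor of $1-q$. The correct identity is $\frac{1+q^{2k-1}}{1-q^{2k-1}}=\frac{[4k-2]_q}{(1-q)[2k-1]_q^2}$, and this is what turns the factor $1-q^2$ into the prefactor $1+q$.
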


\begin{proof}
	Note that by \eqref{eq:CentralQBinomimal2QPochhammer},
	\begin{align*}
		\sum_{k=0}^{m-1} \qbinom{2k}{k}_{q^2}^2\frac{q^{2k}}{(-q;q)_{2k}^2 [k-m]_{q^2}} = (1-q^2) \sum_{k=0}^{m-1} \frac{(q,q;q^2)_{k}q^{2k}}{(q^2,q^2;q^2)_{k}(1-q^{2k-2m})}.
	\end{align*}
	Now we introduce the auxiliary series
	\begin{align*}
		W(a,q) := \frac{1}{1-q^{-2m}} \left({}_{3}\phi_2 \left(\begin{gathered}
			q,a^{-1}q,q^{-2m}\\
			q^2,a^{-1}q^{-2m+2}
		\end{gathered}
		;q^{2},q^2\right) - \frac{(q,a^{-1}q,q^{-2m};q^2)_m q^{2m}}{(q^2,q^2,a^{-1}q^{-2m+2};q^2)_m}\right),
	\end{align*}
	so that
	\begin{align*}
		\lim_{a\to 1} W(a,q) = \sum_{k=0}^{m-1} \frac{(q,q;q^2)_{k}q^{2k}}{(q^2,q^2;q^2)_{k}(1-q^{2k-2m})}.
	\end{align*}
	For the terminating ${}_3 \phi_2$ series in $W(a,q)$, we require the \emph{$q$-Pfaff--Saalsch\"utz summation}~\cite[p.~355, eq.~(II.12)]{GR2004}:
	\begin{align*}
		{}_{3} \phi_{2} \left(\begin{matrix}
			a,b,q^{-N}\\
			c,abc^{-1}q^{1-N}
		\end{matrix};q,q\right) = \frac{(c/a,c/b;q)_N}{(c,c/(ab);q)_N}.
	\end{align*}
	Then we arrive at
	\begin{align*}
		W(a,q) &= \frac{1}{1-q^{-2m}} \left(\frac{(q,aq;q^2)_m}{(q^2,a;q^2)_m} - \frac{(q,a^{-1}q,q^{-2m};q^2)_m q^{2m}}{(q^2,q^2,a^{-1}q^{-2m+2};q^2)_m}\right)\\
		&= -\frac{q^{2m}}{1-q^{2m}} \frac{(q;q^2)_m}{(q^2;q^2)_m} \frac{(aq;q^2)_m - (a^{-1}q;q^2)_m a^m}{(1-a)(aq^2;q^2)_{m-1}},
	\end{align*}
	thereby yielding
	\begin{align*}
		\lim_{a\to 1} W(a,q) &= -\frac{(q;q^2)_m q^{2m}}{(q^2;q^2)_m^2} \lim_{a\to 1} \frac{(aq;q^2)_m - (a^{-1}q;q^2)_m a^m}{1-a}\\
		&= \frac{(q;q^2)_m^2 q^{2m}}{(q^2;q^2)_m^2} \sum_{k=1}^m \frac{\partial}{\partial a}\bigg\vert_{a=1} \log(1-aq^{2k-1})\\
		&\quad - \frac{(q;q^2)_m^2 q^{2m}}{(q^2;q^2)_m^2} \left(m+\sum_{k=1}^m \frac{\partial}{\partial a}\bigg\vert_{a=1} \log(1-a^{-1}q^{2k-1})\right)\\
		&= -\frac{(q;q^2)_m^2 q^{2m}}{(q^2;q^2)_m^2} \sum_{k=1}^m \frac{1+q^{2k-1}}{1-q^{2k-1}}.
	\end{align*}
	Finally, recalling that
	\begin{align*}
		\sum_{k=0}^{m-1} \qbinom{2k}{k}_{q^2}^2\frac{q^{2k}}{(-q;q)_{2k}^2 [k-m]_{q^2}} = (1-q^2) \lim_{a\to 1} W(a,q),
	\end{align*}
	the claimed result holds after a modest amount of elementary algebra.
\end{proof}

\subsection*{Acknowledgements}

Shane Chern was supported by the FWF Austrian Science Fund (No.~10.55776/F1002). Lin Jiu would like to thank Dr.~Chen Zhang at Duke Kunshan University for personal financial support of his living expenses on visiting Dalhousie University, during which period the present work was completed. 

\bibliographystyle{amsplain}

\begin{thebibliography}{9}
	
	\bibitem{AAR1999}
	G. E. Andrews, R. Askey, and R. Roy, \textit{Special functions}, Cambridge University Press, Cambridge, 1999.
	
	\bibitem{DV2025}
	K. Dilcher and C. Vignat, Further classes of series involving central binomial coefficients, preprint. \arxiv{2511.00109}.
	
	\bibitem{GR2004}
	G. Gasper and M. Rahman, \textit{Basic hypergeometric series. Second edition}, Cambridge University Press, Cambridge, 2004.
	
	\bibitem{Gou1972}
	H. W. Gould, \textit{Combinatorial identities}, Morgantown Printing and Binding Co., Morgantown, WV, 1972.
	
	\bibitem{RO2010}
	R. Roy and F. W. J. Olver, Elementary functions, in: \textit{NIST Handbook of Mathematical Functions}, 103--134, Cambridge University Press, Cambridge, 2010.
	
\end{thebibliography}

\end{document}